\documentclass[graybox]{svmult}


\usepackage{type1cm}        
%
\usepackage{makeidx}         
\usepackage{graphicx}        
\usepackage{multicol}        
\usepackage[bottom]{footmisc}

\usepackage{newtxtext}       %
\usepackage[varvw]{newtxmath}       


\makeindex             


\usepackage{amsmath,amsfonts,amsthm,eucal,mathtools} 
\usepackage{subfig,tikz, color} 
\usetikzlibrary{matrix}

\usepackage[norelsize,ruled,vlined,titlenumbered]{algorithm2e}
\setlength{\algoheightrule}{0pt}
\setlength{\algotitleheightrule}{0pt}
\SetKwFor{For}{for}{}{end for}

\usepackage[breakable]{tcolorbox}
\newcommand{\dblock}[1]{\begin{minipage}{\widthof{a #1}}\begin{tcolorbox}[colback=white,colframe=black,right=0cm,left=0cm] #1\end{tcolorbox}\end{minipage}}


\newcommand{\m}[1]{\mathbf{#1}}
\DeclareMathOperator{\supp}{supp}
\newcommand{\cM}{\mathcal{M}}

\newcommand{\NN}{\mathbb{N}}
\newcommand{\defeq}{\coloneq}

\newcommand{\norm}[1]{\left\lVert#1\right\rVert}
\begin{document}

\title*{LR B-spline perspective for RM B-splines:\\
construction and effortless refinements}
\titlerunning{Effortless refinements for RM B-splines}
\author{Francesco Patrizi\orcidID{0000-0002-1836-7906}}
\institute{Francesco Patrizi\at Department of Mathematics, University of Rome Tor Vergata, Via della Ricerca Scientifica 1, 00133, Rome, Italy, \email{patrizi@mat.uniroma2.it}}
%
%
\maketitle

\abstract{
Reachable Minimally supported (RM) B-splines have been recently introduced as a novel B-spline--like basis. They feature local linear independence and admit a fast de Boor--like evaluation algorithm. These properties make them particularly attractive for applications in isogeometric analysis. In this note, we show that automatic mesh refinement procedures can be readily established by observing that RM B-splines are a special case of Locally Refined (LR) B-splines.
}

\section{Introduction}
\label{sec:intro}
The advent of the Isogeometric Analysis (IgA) \cite{iga} has significantly broadened the scope of tensor-product B-splines and NURBS, extending their use from geometric modeling and approximation to numerical simulation. This new approach has introduced several advantages over traditional Finite Element Methods (FEM), including an exact representation of the geometry, independent of the mesh resolution, and a controlled smoothness that allows for continuity up to $C^{p-1}$ for splines of degree $p$, compared to the $C^0$ continuity of FEM. These features lead to more stable solutions, with reduced oscillations, and often a smaller number of degrees of freedom, in some cases up to 90\% fewer than standard FEM for the same accuracy \cite{espen, turbine}. Moreover, IgA provides a direct design-through-analysis workflow that avoids the costly conversion of CAD geometries into analysis-suitable meshes, a process that can consume more than 80\% of the total analysis time in conventional FEM pipelines.

However, the tensor-product structure of classical B-splines and NURBS limits their flexibility: achieving high local accuracy often requires unnecessary refinement in unrelated regions, leading to a large number of redundant degrees of freedom. To overcome this drawback, several spline technologies have been introduced to break the tensor structure while retaining as many of the desirable properties of standard B-splines as possible \cite{ast, pht1, lr, hb, thb, t}. Among these, Locally Refined (LR) B-splines \cite{lr} stand out for their remarkable refinement flexibility. They preserve key B-spline properties and have proven to be highly effective in various approximation tasks, such as data fitting \cite{lrjump, skytt1, skytt2, skytt3, skytt4}.

Nevertheless, the increased freedom in the refinement process comes at a price. The more arbitrary the mesh refinement becomes, the easier it is to generate pathological configurations in which the LR B-spline set loses linear independence. While this is not problematic for many approximation problems it poses a serious obstacle for numerical simulation in the IgA framework. In such cases, the loss of linear independence leads to singular system matrices, complicating the numerical solution process and potentially degrading the overall accuracy.

During the last decade, considerable research has been devoted to understanding and characterizing linear dependencies in LR B-spline sets \cite{lrdep}. Equally important has been the identification of necessary and sufficient conditions ensuring local linear independence, a stronger property that guarantees not only global independence but also enhances the sparsity of the system matrices resulting from the discretizations. Theoretical characterizations have been established \cite{loclinlr}, together with practical refinement strategies that ensure the generated meshes satisfy these conditions. Such strategies either impose a certain structures on the mesh, as in the refinements introduced in \cite{hlr, eg}, or apply local corrective meshline estensions to restore local linear independence when it is lost, as it is done in \cite{n2s2}.

This latter approach is adopted as well in the definition of Reachable Minimally supported (RM) B-splines  \cite{rm1}. RM B-splines form a particular subclass of LR B-splines that are guaranteed to be locally linearly independent by construction, due to the corrective meshline extension procedure designed to recover this property. Furthermore, RM B-splines introduce an additional simplification to the general LR B-spline framework by enforcing a relationship between the meshline multiplicities and the polynomial degree, inspired by the construction of $C^s$-smooth PHT-splines \cite{pht2}.  This constraint reduces the number of possible local configurations of supports, resulting in a mathematically simpler and easier-to-analyze framework. This simplification has been exploited to develop a compact data structure and a de Boor--like evaluation algorithm \cite{rm2}. More recently, stronger criteria for corrective meshline extesions have been proposed to ensure not only local linear independence but also the achievement of certain mesh quality parameters after refinement \cite{rm3} and to improve the mesh structure, reducing propagation.

Although RM B-splines are a specific instance of LR B-splines, this relationship has been not exploited in the design of automatic refinements. In this work, we show that the refinement procedures developed for locally linearly independent LR B-splines can be directly applied to generate RM B-splines. We therefore propose three ready-to-use refinement strategies for RM B-splines, derived from the existing LR refinement methods. Moreover, we show that RM B-splines can be easily incorporated into the LR B-spline framework, providing a straightforward and efficient way to integrate RM B-splines into existing LR implementations. Finally, we carry out numerical comparisons between RM B-splines and LR B-splines of maximal smoothness, when imposing either the same degree or the same regularity. More precisely we investigate the cardinalities of the two sets when varying these parameters and the achieved accuracy in numerical simulations.

The remainder of this paper is organized as follows. In Section \ref{sec:lr}, we provide a brief introduction to the LR B-spline framework, focusing on the bivariate case. Section \ref{sec:rm} introduces the RM B-splines from the LR B-spline perspective, building upon the definitions and procedures established in the previous section. In Section \ref{sec:ref}, we explain why and how the refinement strategies developed for locally linearly independent LR B-splines can also be used to generate RM B-splines, and we present three such refinement schemes. In Section \ref{sec:imp} we show how RM B-splines can be naturally embedded into the LR framework and its implementations and in Section \ref{sec:num} we make numerical comparisions between RM B-splines and maximally smooth LR B-splines, both in terms of set cardinalities and accuracy in numerical simulations. Finally, we draw the conclusions in Section \ref{sec:con}. 

\section{Locally Refined B-splines}
\label{sec:lr}
In this section, we briefly recall the definition of \emph{Locally Refined (LR) B-splines} in the bivariate setting. For the general definition in $\mathbb{R}^n$, we refer to the seminal paper \cite{lr}. We also assume that the reader is familiar with tensor product B-splines and their basic properties. Concise but comprehensive introductions to this topic can be found in \cite{manni1} and \cite{manni2}, while more extensive treatments are available in the classical references \cite{deboor} and \cite{schu}. Let us start by introducing some basic definitions.


\begin{definition}[Meshline multiplicity]
Let $\mathbf{p} = (p_1, p_2)$ be a bidegree and let $\mathcal{M}$ be a tensor-product mesh.
Each meshline $\gamma$ of $\mathcal{M}$ is assigned an integer multiplicity
$$
1 \le \mu(\gamma) \le p_k + 1,
$$
where $k = 1$ for vertical meshlines and $k = 2$ for horizontal meshlines. The tensor-product mesh $\cM$ is called open if the boundary meshlines have multiplicity $p_k + 1$ with $k = 1$ (vertical edges) and $k = 2$ (horizontal edges).
\end{definition}

\begin{definition}[Local tensor mesh \& B-spline support]
Let $B$ be a tensor-product B-spline defined on $\mathcal{M}$.
The local knot vectors of $B$ determine a local tensor mesh
$$
\mathcal{M}_B \subseteq \mathcal{M},
$$
consisting of the $p_1 + 2$ consecutive vertical meshlines and the $p_2 + 2$ consecutive horizontal meshlines (counting multiplicities) associated with the local knot vectors.
The region enclosed by $\mathcal{M}_B$ coincides with the support of $B$, denoted by $\supp B$.
\end{definition}

\begin{definition}[Local meshline multiplicity]
Let $\gamma$ be a meshline of $\mathcal{M}_B$. The local meshline multiplicity of $\gamma$ with respect to $B$ is defined as the number of times the parameter value of $\gamma$ appears in the local knot vectors of $B$. We denote this by $\mu_B(\gamma)$.
\end{definition}

Suppose, for example, that $\gamma_B = \gamma \cap \mathcal{M}_B$ is a vertical local meshline of the form $\gamma_B = \{x_B^j\} \times [y_B^1,\, y_B^{p_2+2}]$.
Then $\mu_B(\gamma_B)$ is equal to the multiplicity of  $x_B^j$ in the local knot vector of $B$ in the first parametric direction.

\begin{remark}
Clearly, $\mu_B(\gamma_B) \le \mu(\gamma)$, since a local multiplicity cannot exceed the global multiplicity of the meshline from which it is derived.
\end{remark}

In Figure \ref{fig:tensorex} we display a tensor mesh and some local tensor meshes of B-splines of bidegre $(2, 2)$ defined on it.
\begin{figure}
\centering
\subfloat[]{
\begin{tikzpicture}
\draw[step = 1, thick] (0, 0) grid (4, 4);
\draw[xshift = - 0.05cm, thick] (1, 0) -- (1, 4);
\draw[xshift = 0.05cm, thick] (1, 0) -- (1, 4);
\draw (1, 4) node[above]{$\gamma$};
\end{tikzpicture}
}\quad
\subfloat[]{
\begin{tikzpicture}
\draw[step = 1, thick] (0, 0) grid (4, 4);
\draw[xshift = - 0.05cm, thick] (1, 0) -- (1, 4);
\draw[xshift = 0.05cm, thick] (1, 0) -- (1, 4);
\draw (1, 4) node[above]{$\gamma$};
\filldraw[fill = red!50!white, draw = red, thick] (0.95, 0) -- (2, 0) -- (2, 3) -- (0.95, 3) -- cycle;
\draw[red, xshift = 0.00cm, thick] (1, 0) -- (1, 3);
\draw[red, xshift = 0.05cm, thick] (1, 0) -- (1, 3);
\draw[red, thick] (0.95, 1) -- (2, 1);
\draw[red, thick] (0.95, 2) -- (2, 2);
\draw[red] (1.5, 3) node[above]{$B_1$};
\end{tikzpicture}
}\quad
\subfloat[]{
\begin{tikzpicture}
\draw[step = 1, thick] (0, 0) grid (4, 4);
\draw[xshift = - 0.05cm, thick] (1, 0) -- (1, 4);
\draw[xshift = 0.05cm, thick] (1, 0) -- (1, 4);
\draw (1, 4) node[above]{$\gamma$};
\filldraw[fill = green!50!black!50!white, draw = green!50!black, thick] (1, 0) -- (3, 0) -- (3, 3) -- (1, 3) -- cycle;
\draw[green!50!black, xshift = 0.05cm, thick] (1, 0) -- (1, 3);
\draw[green!50!black, thick] (1, 1) -- (3, 1);
\draw[green!50!black, thick] (1, 2) -- (3, 2);
\draw[green!50!black, thick] (2, 0) -- (2, 3);
\draw[green!50!black, thick] (2, 3) node[above]{$B_2$};
\end{tikzpicture}
}\quad
\subfloat[]{
\begin{tikzpicture}
\draw[step = 1, thick] (0, 0) grid (4, 4);
\draw[xshift = - 0.05cm, thick] (1, 0) -- (1, 4);
\draw[xshift = 0.05cm, thick] (1, 0) -- (1, 4);
\draw (1, 4) node[above]{$\gamma$};
\filldraw[fill = cyan!80!black!50!white, draw = cyan!80!black, thick] (1.05, 0) -- (4, 0) -- (4, 3) -- (1.05, 3) -- cycle;
\draw[cyan!80!black, thick] (1.05, 1) -- (4, 1);
\draw[cyan!80!black, thick] (1.05, 2) -- (4, 2);
\draw[cyan!80!black, thick] (2, 0) -- (2, 3);
\draw[cyan!80!black, thick] (3, 0) -- (3, 3);
\draw[cyan!80!black, thick] (2.5, 3) node[above]{$B_3$};
\end{tikzpicture}
}
\caption{Example of tensor product mesh and some tensor product B-splines on it. In (a) the mesh $\cM$. Given the degree $\m{p} = (2, 2)$, the vertical meshline $\gamma$ has multiplicity $\mu(\gamma) = 3$, i.e., the maximal multiplicity allowed for $\m{p}$. Such multiplicity is emphasized in the figures by drawing $\gamma$ with a triple line. In (b)--(d) we consider three different biquadratic B-splines $B_1, B_2$ and $B_3$ on $\cM$.  The highlighted regions correspond to their supports and their local tensor meshes are also coloured. In particular (part of) $\gamma$ is in all of them, as $\gamma_{B_1} = \gamma_{B_2} = \gamma_{B_3}$. However, it is here considered with different multiplicities: $\mu_{B_1}(\gamma_{B_1}) = 3, \mu_{B_2}(\gamma_{B_2}) = 2, \mu_{B_3}(\gamma_{B_3}) = 1$.}\label{fig:tensorex}
\end{figure}
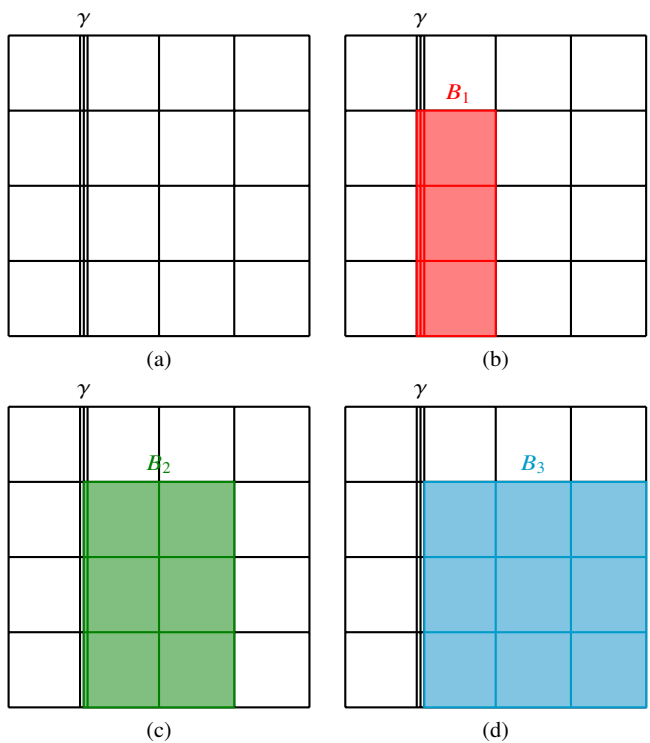

The collection of B-splines on a tensor mesh may be enriched in two ways:
\begin{enumerate}
    \item by inserting a new meshline into $\mathcal{M}$ that intersects the support of existing B-splines, or
    \item by increasing the multiplicity $\mu(\gamma)$ of an existing meshline $\gamma \in \mathcal{M}$.
\end{enumerate}
In either case, some of the existing B-splines will lose the \textbf{minimal support property}: their local tensor meshes $\mathcal{M}_B$ can no longer be obtained by taking consecutive $p_1 + 2$ vertical and $p_2 + 2$ horizontal meshlines (counting multiplicities). In other words, certain meshlines of the refined mesh are ``skipped'' when forming the local tensor meshes of these B-splines. In terms of their local knot vectors, the skipped meshlines of the mesh correspond to missed knots in the knot sequences.
However, by the knot insertion procedure \cite[Section 3.5]{manni1}, any B-spline defined on a given knot vector can be expressed as a linear combination of two B-splines defined on a refined knot vector when a new knot is inserted. These two new B-splines are defined on local knot vectors corresponding to the first and last $p_k + 2$ consecutive knots in the extended vector of length $p_k + 3$, which includes the new knot. Consequently, the space can be enriched by replacing each non-minimally supported B-spline with the newly generated B-splines, thereby recovering the minimal support property:  
each local tensor mesh consists of consecutive $p_1 + 2$ vertical and $p_2 + 2$ horizontal meshlines (counting multiplicities) of the underlying mesh. We are ready for the definitions of LR B-splines and LR meshes.

\begin{definition}[Locally Refined (LR) B-splines]
LR B-splines are defined recursively as follows:
\begin{itemize}
    \item If the underlying mesh is a tensor-product mesh, LR B-splines coincide with the tensor-product B-splines.
    \item Otherwise, LR B-splines are obtained by recursively applying the knot insertion procedure to previously defined LR B-splines on a coarser mesh, ensuring that all resulting B-splines satisfy the minimal support property.
\end{itemize}
\end{definition}

\begin{definition}[Locally Refined (LR) mesh]
An LR mesh is either a tensor-product mesh or a mesh generated by inserting a new meshline that traverses the support of at least one LR B-spline, or by increasing the multiplicity of an existing meshline,  subject to the constraint that each multiplicity remains at most $p_k + 1$ for $k \in \{1, 2\}$.
\end{definition}

\begin{remark}
The insertion of a new meshline may trigger the knot insertion procedure multiple times, since the two newly generated B-splines may themselves lack of minimal support due to the presence of other existing meshlines, see Figure~\ref{fig:lrexample} for an illustrative example.
\end{remark}
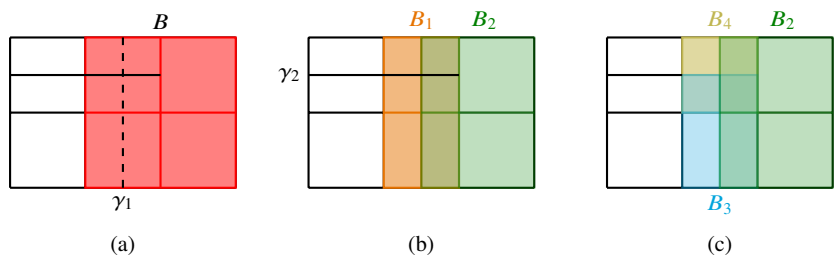
\begin{figure}
\centering
\subfloat[]{
\begin{tikzpicture}
\draw[step = 1, thick] (0, 0) grid (3, 2);
\filldraw[fill = red!50!white, draw = red, thick] (1, 0) -- (3, 0) -- (3, 2) -- (1, 2) -- cycle;
\draw[red, thick] (1, 1) -- (3, 1);
\draw[red, thick] (2, 0) -- (2, 2); 
\draw[thick] (0, 1.5) -- (2, 1.5);
\draw[thick, dashed] (1.5, 0) -- (1.5, 2);
\draw (1.5, 0) node[below]{$\gamma_1$};
\draw[white] (2, 0) node[below]{$B_3$};
\draw (2, 2) node[above]{$B$};
\end{tikzpicture}
}\quad
\subfloat[]{
\begin{tikzpicture}
\draw[step = 1, thick] (0, 0) grid (3, 2);
\filldraw[fill = orange!90!black!50!white, draw = orange!90!black, thick] (1, 0) -- (2, 0) -- (2, 2) -- (1, 2) -- cycle;
\draw[orange!90!black, thick] (1, 1) -- (2, 1);
\draw[orange!90!black, thick] (1.5, 0) -- (1.5, 2); 
\filldraw[fill = green!50!black!50!white, draw = green!50!black, thick, opacity = 0.5] (1.5, 0) -- (3, 0) -- (3, 2) -- (1.5, 2) -- cycle;
\draw[green!50!black, thick, opacity = 0.5] (1.5, 1) -- (3, 1);
\draw[green!50!black, thick, opacity = 0.5] (2, 0) -- (2, 2);
\draw[thick] (0, 1.5) -- (2, 1.5);
\draw[white] (1.5, 0) node[below]{$\gamma_1$};
\draw[white] (2, 0) node[below]{$B_3$};
\draw (0, 1.5) node[left]{$\gamma_2$};
\draw[white] (3, 1.5) node[right]{$\gamma_2$};
\draw[orange!90!black] (1.5, 2) node[above]{$B_1$};
\draw[green!50!black] (2.333, 2) node[above]{$B_2$};
\end{tikzpicture}
}\quad
\subfloat[]{
\begin{tikzpicture}
\draw[step = 1, thick] (0, 0) grid (3, 2);
\filldraw[fill = yellow!70!black!50!white, draw = yellow!70!black, thick] (1, 1) -- (2, 1) -- (2, 2) -- (1, 2) -- cycle;
\draw[yellow!70!black, thick, opacity = 0.5] (1, 1.5) -- (2, 1.5);
\draw[yellow!70!black, thick, opacity = 0.5] (1.5, 1) -- (1.5, 2); 
\filldraw[fill = cyan!90!black!50!white, draw = cyan!90!black, opacity = 0.5, thick] (1, 0) -- (2, 0) -- (2, 1.5) -- (1, 1.5) -- cycle;
\draw[cyan!90!black, thick, opacity = 0.5] (1, 1) -- (2, 1);
\draw[cyan!90!black, thick, opacity = 0.5] (1.5, 0) -- (1.5, 1.5);
\filldraw[fill = green!50!black!50!white, draw = green!50!black, thick, opacity = 0.5] (1.5, 0) -- (3, 0) -- (3, 2) -- (1.5, 2) -- cycle;
\draw[green!50!black, thick, opacity = 0.5] (1.5, 1) -- (3, 1);
\draw[green!50!black, thick, opacity = 0.5] (2, 0) -- (2, 2);
\draw[thick] (0, 1.5) -- (1, 1.5);
\draw[white] (1.5, 0) node[below]{$\gamma_1$};
\draw[yellow!70!black] (1.5, 2) node[above]{$B_4$};
\draw[green!50!black] (2.333, 2) node[above]{$B_2$};
\draw[cyan!90!black] (1.5, 0) node[below]{$B_3$};
\end{tikzpicture}
}
\caption{Mesh refinement and secondary split of the generated B-splines to recover the minimal support property. Let $\m{p} = (1, 1)$. In figure (a) we see a bilinear B-spline $B$ which is refined by the new meshline $\gamma_1$ (dashed). $B$ does not have minimal support on the refined mesh. We replace it with the two B-splines $B_1, B_2$ pictured in figure (b), involved in the knot insertion relation along the first direction. However, $B_1$ has not minimal support on the new mesh as well as its support is traversed by a meshline $\gamma_2$ on the mesh. Therefore, also $B_1$ is replaced using the knot insertion procedure. The final set of B-spline $\{B_2, B_3, B_4\}$ generated from the refinement of $B$ is illustrated in figure (c).}\label{fig:lrexample}
\end{figure}

LR B-splines have been successfully employed in geometric design, data modeling, and simulation \cite{lriga3, lriga2, lriga1, lriga5, lriga4} within the framework of Isogeometric Analysis. However, their application in numerical simulation is hindered by the potential occurrence of linear dependencies among the LR B-splines. In such cases, the collection of LR B-splines forms only a generator set rather than a basis. Thus, the linear systems arising from the discretization of differential problems become singular, which complicates the numerical solution and may adversely affect accuracy.
However, under certain conditions LR B-splines are even locally linearly independent.
\begin{definition}[Local linear independence]
A collection of functions is said to be locally linearly independent if it is linearly independent on every open subset of the domain.
\end{definition}
Local linear independence implies global linear independence, allowing the LR B-spline set to be regarded as a basis of the space they span in these cases. For LR B-splines, local linear independence is equivalent to linear independence on each mesh box (cell). Thus, verifying linear independence on individual cells is equivalent to guarantee local linear independence on arbitrary open sets, as proved by the following  characterization \cite[Theorem 4]{hlr}.

\begin{theorem}[Characterization of Local Linear Independence with Non-Overloading]
LR B-splines are locally linearly independent if and only if every mesh cell is non-overloaded, meaning that each cell is covered by exactly $(p_1 + 1)(p_2 + 1)$ LR B-spline supports. 
\end{theorem}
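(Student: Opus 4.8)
The plan is to combine the stated reduction of local linear independence to cellwise linear independence with a dimension count carried out on each cell. Fix a mesh cell $\beta$ and let $\mathbb{P}_{\m p}$ denote the space of bivariate polynomials of bidegree $(p_1,p_2)$, of dimension $d \defeq (p_1+1)(p_2+1)$. Since no meshline passes through the interior of $\beta$, every LR B-spline restricts there either to the zero function (if $\beta \not\subseteq \supp B$) or to a nonzero element of $\mathbb{P}_{\m p}$ (if $\beta$ lies in the interior of $\supp B$). Hence the restriction to $\beta$ of the whole LR B-spline collection is carried entirely by the $n(\beta)$ B-splines covering $\beta$, and cellwise linear independence is exactly the linear independence of these $n(\beta)$ restrictions inside the $d$-dimensional space $\mathbb{P}_{\m p}$.

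The key auxiliary fact I would establish first is that these $n(\beta)$ covering restrictions always span all of $\mathbb{P}_{\m p}$. Because the LR B-splines are produced from the tensor-product B-splines by repeated knot insertion, and each knot insertion replaces a B-spline by two functions of which it is a linear combination, the space $\mathbb{S}$ spanned by the current LR B-splines contains the original tensor-product spline space. That coarse space already reproduces every polynomial of bidegree $(p_1,p_2)$ on each of its cells, and restricting such a reproduction to the (possibly finer) subcell $\beta$ is an isomorphism of polynomial spaces, so $\mathbb{P}_{\m p} \subseteq \mathbb{S}|_\beta$. Conversely $\mathbb{S}|_\beta \subseteq \mathbb{P}_{\m p}$, since every LR B-spline is a polynomial of bidegree at most $(p_1,p_2)$ on $\beta$. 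Thus $\mathbb{S}|_\beta = \mathbb{P}_{\m p}$, and as the B-splines not covering $\beta$ contribute nothing, the $n(\beta)$ covering restrictions form a spanning set of $\mathbb{P}_{\m p}$. In particular $n(\beta) \ge d$ for every cell, so no cell can be ``underloaded''.

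The conclusion then follows from elementary linear algebra: a spanning family of a $d$-dimensional space is linearly independent precisely when it consists of exactly $d$ vectors. Applied cellwise, the covering restrictions on $\beta$ are linearly independent iff $n(\beta) = d = (p_1+1)(p_2+1)$, i.e. iff $\beta$ is non-overloaded, whereas $n(\beta) > d$ forces dependence. Chaining the equivalences---local linear independence $\iff$ cellwise linear independence on every cell $\iff$ $n(\beta) = (p_1+1)(p_2+1)$ for every cell---yields the statement. I expect the main obstacle to be the spanning step: one must argue rigorously that the recursive knot-insertion construction preserves polynomial reproduction down to every refined cell, including cells created by later meshline insertions, which is where the precise definition of LR B-splines and the invariance of the spanned space under knot insertion must be invoked with care; the remaining counting is routine.
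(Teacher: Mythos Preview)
The paper does not supply its own proof of this theorem: it is stated with a citation to \cite[Theorem~4]{hlr} and then used as a black box, so there is no in-paper argument to compare your proposal against.

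That said, your outline is essentially the standard argument and is sound. The spanning step is the crux, and your justification---knot insertion never shrinks the span, so the LR collection still contains the coarse tensor-product space, which already reproduces $\mathbb{P}_{\m p}$ on every coarse cell and hence on every refined subcell---is exactly what is needed; the dimension count then finishes both directions. The one place to tighten is the reduction you label ``the stated reduction'': the implication from cellwise linear independence to local linear independence on an \emph{arbitrary} open set $U$ is not quite free. You must observe that whenever $\supp B \cap U \neq \emptyset$ there is a cell $\beta \subseteq \supp B$ whose interior meets $U$, and then use that the restrictions are polynomials on $\beta$ to propagate the vanishing from $U\cap\beta$ to all of $\beta$ before invoking cellwise independence. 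This is routine but should be made explicit rather than absorbed into a phrase borrowed from the paper's preamble, since that preamble is precisely pointing to the cited theorem rather than asserting an independent fact.
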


\begin{remark}
The number $(p_1+1)(p_2+1)$ is equal to the dimension of the tensor-product polynomial space of bi-degree $\m{p} = (p_1,p_2)$ over a single cell.
Thus:
\begin{itemize}
    \item a non-overloaded cell has exactly the number of LR B-splines required to span the polynomial space,
    \item an overloaded cell is a cell covered by more than $(p_1+1)(p_2+1)$ LR B-splines, which necessarily exhibits linear dependence on that cell and therefore fail to satisfy local linear independence.
\end{itemize}
If the LR B-spline construction starts from an open tensor-product mesh then every cell is initially non-overloaded. Then, mesh refinements can only increase the number of B-splines with support over a cell. Thus, the number of functions covering each new cell can only exceed, but never fall below, $(p_1 + 1)(p_2 + 1)$.
\end{remark}
The non-overloading condition will play a crucial role in the next section, where we define the Reachable Minimally supported (RM) B-splines.

\section{Reachable Minimally supported B-splines}
\label{sec:rm}
In this section, we introduce the \emph{Reachable Minimally supported (RM) B-splines}. As they form a special subclass of LR B-splines, we will rely on the notation and concepts introduced in Section \ref{sec:lr}. Nevertheless, it is also possible to define RM B-splines independently, without reference to the LR framework, as done in \cite{rm1}. Here, we adopt an LR-based formulation to emphasize the connection between the two constructions.

Let $\mathcal{M}$ be a tensor-product mesh and let the bidegree $\m{p} = (p, p)$ with $p \defeq 2s + 1$ and $s \in \mathbb{N} = \{0, 1, \ldots\}$. We assign the meshline multiplicities as follows:
\begin{itemize}
    \item every internal meshline of $\mathcal{M}$ receives multiplicity $s+1$,
    \item every boundary meshline receives multiplicity $p+1$.
\end{itemize}
The tensor-product B-splines of bidegree $\mathbf{p}$ defined on $\mathcal{M}$ are grouped into collections of $(s+1)^2$ functions, called B-spline systems.

\begin{definition}[B-spline system]
Two B-splines $B_1$ and $B_2$ belong to the same B-spline system if and only if 
$$
\supp B_1 = \supp B_2.
$$
\end{definition}

\begin{remark}
Because of the meshline multiplicities, exactly $(s+1)^2$ tensor-product B-splines share each support.  
Each support consists of four cells, except near the boundary where fewer cells occur due to the higher boundary multiplicities.
\end{remark}

An illustrative example of B-spline system is provided in Figure \ref{fig:bsystem}.
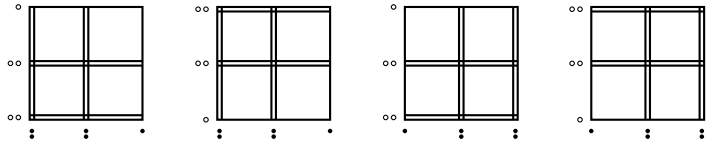
\begin{figure}
\centering
\subfloat{
\begin{tikzpicture}[scale = 1.5]
\draw[thick] (0, 0) rectangle (1, 1);
\draw[thick, xshift = - 0.02cm] (0.5, 0) -- (0.5, 1);
\draw[thick, xshift = 0.02cm] (0.5, 0) -- (0.5, 1);
\draw[thick, yshift = -0.02cm] (0, 0.5) -- (1, 0.5);
\draw[thick, yshift = 0.02cm] (0, 0.5) -- (1, 0.5);

\draw[thick, xshift = 0.04cm] (0, 0) -- (0, 1);
\draw[thick, yshift = 0.04cm] (0, 0) -- (1, 0);

\fill[xshift = 0.02cm, yshift = -0.1cm] (0, 0) circle (0.02);
\fill[xshift = 0.02cm, yshift = -0.15cm] (0, 0) circle (0.02);
\fill[yshift = -0.1cm] (0.5, 0) circle (0.02);
\fill[yshift = -0.15cm] (0.5, 0) circle (0.02);
\fill[yshift = -0.1cm] (1, 0) circle (0.02);

\draw[yshift = 0.02cm, xshift = -0.1cm] (0, 0) circle (0.02);
\draw[yshift = 0.02cm, xshift = -0.17cm] (0, 0) circle (0.02);
\draw[xshift = -0.1cm] (0, 0.5) circle (0.02);
\draw[xshift = -0.17cm] (0, 0.5) circle (0.02);
\draw[xshift = -0.1cm] (0, 1) circle (0.02);
\end{tikzpicture}}\qquad
\subfloat{
\begin{tikzpicture}[scale = 1.5]
\draw[thick] (0, 0) rectangle (1, 1);
\draw[thick, xshift = - 0.02cm] (0.5, 0) -- (0.5, 1);
\draw[thick, xshift = 0.02cm] (0.5, 0) -- (0.5, 1);
\draw[thick, yshift = -0.02cm] (0, 0.5) -- (1, 0.5);
\draw[thick, yshift = 0.02cm] (0, 0.5) -- (1, 0.5);

\draw[thick, xshift = 0.04cm] (0, 0) -- (0, 1);
\draw[thick, yshift = -0.04cm] (0, 1) -- (1, 1);

\fill[xshift = 0.02cm, yshift = -0.1cm] (0, 0) circle (0.02);
\fill[xshift = 0.02cm, yshift = -0.15cm] (0, 0) circle (0.02);
\fill[yshift = -0.1cm] (0.5, 0) circle (0.02);
\fill[yshift = -0.15cm] (0.5, 0) circle (0.02);
\fill[yshift = -0.1cm] (1, 0) circle (0.02);

\draw[yshift = -0.02cm, xshift = -0.1cm] (0, 1) circle (0.02);
\draw[yshift = -0.02cm, xshift = -0.17cm] (0, 1) circle (0.02);
\draw[xshift = -0.1cm] (0, 0.5) circle (0.02);
\draw[xshift = -0.17cm] (0, 0.5) circle (0.02);
\draw[xshift = -0.1cm] (0, 0) circle (0.02);
\end{tikzpicture}}\qquad
\subfloat{
\begin{tikzpicture}[scale = 1.5]
\draw[thick] (0, 0) rectangle (1, 1);
\draw[thick, xshift = - 0.02cm] (0.5, 0) -- (0.5, 1);
\draw[thick, xshift = 0.02cm] (0.5, 0) -- (0.5, 1);
\draw[thick, yshift = -0.02cm] (0, 0.5) -- (1, 0.5);
\draw[thick, yshift = 0.02cm] (0, 0.5) -- (1, 0.5);

\draw[thick, xshift = -0.04cm] (1, 0) -- (1, 1);
\draw[thick, yshift = 0.04cm] (0, 0) -- (1, 0);

\fill[xshift = -0.02cm, yshift = -0.1cm] (1, 0) circle (0.02);
\fill[xshift = -0.02cm, yshift = -0.15cm] (1, 0) circle (0.02);
\fill[yshift = -0.1cm] (0.5, 0) circle (0.02);
\fill[yshift = -0.15cm] (0.5, 0) circle (0.02);
\fill[yshift = -0.1cm] (0, 0) circle (0.02);

\draw[yshift = 0.02cm, xshift = -0.1cm] (0, 0) circle (0.02);
\draw[yshift = 0.02cm, xshift = -0.17cm] (0, 0) circle (0.02);
\draw[xshift = -0.1cm] (0, 0.5) circle (0.02);
\draw[xshift = -0.17cm] (0, 0.5) circle (0.02);
\draw[xshift = -0.1cm] (0, 1) circle (0.02);
\end{tikzpicture}}\qquad
\subfloat{
\begin{tikzpicture}[scale = 1.5]
\draw[thick] (0, 0) rectangle (1, 1);
\draw[thick, xshift = - 0.02cm] (0.5, 0) -- (0.5, 1);
\draw[thick, xshift = 0.02cm] (0.5, 0) -- (0.5, 1);
\draw[thick, yshift = -0.02cm] (0, 0.5) -- (1, 0.5);
\draw[thick, yshift = 0.02cm] (0, 0.5) -- (1, 0.5);

\draw[thick, xshift = -0.04cm] (1, 0) -- (1, 1);
\draw[thick, yshift = -0.04cm] (0, 1) -- (1, 1);

\fill[xshift = -0.02cm, yshift = -0.1cm] (1, 0) circle (0.02);
\fill[xshift = -0.02cm, yshift = -0.15cm] (1, 0) circle (0.02);
\fill[yshift = -0.1cm] (0.5, 0) circle (0.02);
\fill[yshift = -0.15cm] (0.5, 0) circle (0.02);
\fill[yshift = -0.1cm] (0, 0) circle (0.02);

\draw[yshift = -0.02cm, xshift = -0.1cm] (0, 1) circle (0.02);
\draw[yshift = -0.02cm, xshift = -0.17cm] (0, 1) circle (0.02);
\draw[xshift = -0.1cm] (0, 0.5) circle (0.02);
\draw[xshift = -0.17cm] (0, 0.5) circle (0.02);
\draw[xshift = -0.1cm] (0, 0) circle (0.02);
\end{tikzpicture}}
\caption{Example of B-spline system in the interior of the mesh for $s = 1$. All the internal meshlines of the mesh have multiplicity $s + 1 = 2$. In the figures we show the local knot vectors and local tensor meshes of the B-splines belonging to the same system. Their local knot vectors, composed of $p + 2 = 5$ knots, in the first and second directions have been illustrated with full and empty dots respectively. Furthermore, double dots correspond to knots of multiplicity $2$ in the local knot vector. Such local knot vectors generate local tensor meshes, covering the support of the associated B-splines. Double dots correspond to double lines to emphasize the higher local meshline multiplicity considered. Although the 4 B-splines are all different, they have same support and hence belong to the same system.}\label{fig:bsystem}
\end{figure}

\begin{remark}
If $s=0$, each system consists of a single bilinear tensor-product B-spline. 
\end{remark}

For any $s \in \mathbb{N}$, every mesh cell belongs to exactly four B-spline systems, due to the tensor product structure. 
During mesh refinement, only new meshlines traversing the supports of existing B-splines are allowed, as it always happens in the LR B-spline setting. Each new meshline is assigned multiplicity $s + 1$. Since the B-splines within a given system share the same support, every insertion triggers the refinement of entire B-spline systems, i.e., of $(s +1)^2$ B-splines, rather than of single functions. After insertion, all functions belonging to the traversed systems are replaced by new functions with minimal support on the refined mesh, in accordance with the standard knot insertion procedure of the LR B-spline framework, forming new B-spline systems.

However, to qualify as RM B-splines, the generated LR B-splines must also preserve the property that each mesh cell remains contained in the supports of the B-splines belonging to exactly four B-spline systems after every insertion. This ensures that the resulting set of functions maintains local linear independence, since each cell is then covered by $4(s + 1)^2 = (p + 1)^2$ function supports, precisely the number required to span the bivariate polynomial space over that cell. In other words, no cell shall be overloaded after the refinement.

In practice, when a meshline is inserted for refinement and new functions are created, overloaded cells are identified and the inserted meshline is extended so that it traverses all B-spline systems containing those cells. Overloading is then checked again after each extension. This process is guaranteed to terminate after finitely many steps and ensures local linear independence, see \cite[Section 5]{rm1} for the case $s = 0$ and \cite[Section 6.4]{rm1} for the general case. 
After all the needed extensions have been performed following the insertion of a new meshline, the resulting B-splines satisfy the following properties:
\begin{itemize}
\item \textbf{R}eachability: they are obtained from an initial set of tensor product B-splines through successive knot insertions, as always in the LR B-spline framework,
\item \textbf{M}inimal support: no meshline of the underlying mesh traverses their support without being part of their local tensor mesh, again as always in the LR B-spline setting,
\item form a \textbf{B}asis: they are locally linearly independent on the mesh, which implies (global) linear independence.
\end{itemize}

This gives rise to the terminology \emph{RM B-splines}. However, RM B-splines can also be interpreted as a specific refinement strategy for LR B-splines that guarantees local linear independence for LR B-splines of degree $p = 2s + 1$ defined on LR meshes with meshline multiplicities adjusted as described above. From this perspective, the RM refinement procedure can be viewed as an LR refinement process in which newly inserted meshlines are systematically extended to ensure the local linear independence property.

On the other hand, locally linearly independent LR B-splines of degree $p = 2s + 1$ on LR meshes with such meshline multiplicity distribution, namely, the RM B-splines, can also be obtained with other approaches, relaying on the existing results for LR B-splines of arbitrary degree. The next section will present automatic refinement strategies that guarantee local linear independence of the created B-splines without requiring explicit checks for overloaded cells and hence further meshline extensions. We conclude this introductory section on RM B-splines with a few final remarks.

\begin{remark}
We recall that in the introductory paper on RM B-splines \cite{rm1}, the domain $\Omega = [a, b]^2$ is extended with a \emph{phantom zone} to $[a-1, b+1]^2$, and the initial tensor mesh is defined over this enlarged domain. This extension avoids the use of open meshes and hence allows assigning a uniform multiplicity of $s + 1$ to all meshlines, including those at the boundaries. Such a choice simplifies the construction of the B-spline system, as each system then corresponds one-to-one to a block of four cells in the tensor mesh over the extended domain. Refinement starts, of course, marking only cells within $\Omega$.  
If, on the other hand, we work directly on the original domain with open meshes, the boundary B-spline systems cover smaller regions of the mesh because some boundary cells are collapsed and result in increased multiplicities of the boundary meshlines. Both constructions, however, are equivalent when restricted to $\Omega$.
\end{remark}

\begin{remark}
In some works \cite{rm1, rm2}, the new meshlines introduced during refinement are placed at dyadic rational parameter values. This was initially proposed to improve the shape quality of the newly generated mesh cells. Nevertheless, this constraint is not necessary for the general definition of RM B-splines.
\end{remark}

\begin{remark}
As previously discussed, local linear independence is ensured after inserting a new meshline by extending it. Specifically, if a cell becomes overloaded, the meshline is extended to traverse all B-spline systems that contain that cell. In certain cases, however, extending the meshline through all such supports may not be strictly necessary to achieve local linear independence. Extending it through only a subset of these systems might be sufficient, resulting in a more localized refinement and reduced meshline propagation beyond the region marked for refinement. This procedure has been adopted in the algorithm proposed in \cite{n2s2} for enforcing local linear independence in LR B-splines of arbitrary degree and maximal smoothness.
\end{remark}

\begin{remark}
Stricter refinement rules have also been proposed for generating RM B-splines. In addition to ensuring local linear independence, new meshlines can be inserted and extended to meet specific mesh quality parameters \cite{rm3} or to satisfy the so-called Local Semi-Regularity (LSR) condition \cite{rm1}. These optional constraint, implemented via flags in the algorithms, are designed to limit refinement propagation outside the marked region and to improve the overall mesh structure. However, they are not mandatory for RM B-spline generation. The procedure remains essentially the same as for standard RM B-splines, except that these additional conditions are checked in place of the overloading test. In fact, satisfying either the mesh quality parameters or the LSR condition automatically guarantees local linear independence. Therefore, verifying one of these conditions is sufficient for the resulting LR B-splines to be classified as RM B-splines.
\end{remark}

\begin{remark}
In \cite{rm1}, the bilinear RM B-splines are introduced without making use of the LR B-spline framework.  
The knot insertion procedure employed to restore the minimal support property is there called \emph{fixed point refinement operator}, and LR meshes built with respect to bidegree $\mathbf{p} = (1,1)$ are named as \emph{T-meshes with Associated Bilinear B-splines (TAB)}.
The simplified nature of the bilinear case enables an explicit identification of all the possible local displacement configurations, in the topological sense, of the B-spline supports covering a chosen cell under the RM B-spline generation algorithm.  
In the general situation (without the LSR condition), 385 distinct configurations have been classified numerically. When the LSR condition is imposed, this number reduces to 49. 
Non-overloading is verified in each configuration, and therefore the local linear independence of bilinear RM B-splines is established by exhaustion.
\end{remark}

\section{Effortless refinements and generation of RM B-splines}
\label{sec:ref}
The RM B-splines form a special subclass of LR B-splines of degree $\m{p} = (p, p)$ with $p = 2s + 1$, defined on peculiar LR meshes that we shall call \emph{admissible for RM B-splines}. 
\begin{definition}[Admissible LR mesh for RM B-splines]
Let $\mathbf{p} = (p,p)$ with $p = 2s+1$ and $s \in \mathbb{N}$.  
An LR mesh $\mathcal{M}$ is called admissible for RM B-splines of degree $\mathbf{p}$ if the following conditions hold:
\begin{enumerate}
    \item Every internal meshline has multiplicity $s+1$, and every boundary meshline has multiplicity $2s+2 = p + 1$, i.e., the mesh is open.
    \item The meshlines are positioned in such a manner to ensure local linear independence of the LR B-splines of degree $\mathbf{p}$ defined on $\mathcal{M}$.
\end{enumerate}
\end{definition}
Although this characterization has always been known \cite{rm1}, it has never been exploited for the refinement and construction of RM B-splines. The adopted methodology in the previous works relies on the refinement procedure outlined in Section \ref{sec:rm}, which requires verifying the admissibility of each newly inserted meshline and, when necessary, performing corrective extensions. In this section, we show that RM B-splines can be instead constructed directly from sets of locally linearly independent bilinear LR B-splines, simply by increasing the multiplicities of the meshlines. Thereby, any mesh refinement strategy for LR B-splines that guarantees the local linear independence, applied in the bilinear case, can be safely employed to generate RM B-splines, without the need to check overloading  and apply corrections during refinement.

The following result provides the key ingredient for this generation procedure.
\begin{proposition}\label{prop}
If an LR mesh is admissible for RM B-splines of degree $\m{1} = (1, 1)$, then it shall be admissible for RM B-splines of any degree $\m{p} = (p, p)$ after increasing the multiplicities of the internal meshlines by $s$ and by $2s$ those of the boundary meshlines.
\end{proposition}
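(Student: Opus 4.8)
The plan is to reduce the whole statement to the non-overloading characterization of local linear independence. First I would dispose of the first admissibility condition (multiplicities and openness) by a trivial arithmetic check: raising the internal multiplicity from $1$ to $1+s = s+1$ and the boundary multiplicity from $2$ to $2+2s = 2s+2 = p+1$ produces exactly the multiplicity distribution required of an admissible mesh for degree $\m{p}$, and the mesh stays open; all multiplicities remain $\le p+1$, so it is still a valid LR mesh. Crucially, increasing multiplicities introduces no new meshline \emph{positions}, so the cell partition of the domain is literally the same for the bilinear mesh $\cM$ and its enriched degree-$\m{p}$ counterpart, which lets me speak of the same cells in both settings. It then remains to establish condition 2, namely local linear independence of the degree-$\m{p}$ LR B-splines, which by the Characterization theorem is equivalent to every cell being covered by exactly $(p+1)^2 = 4(s+1)^2$ supports.

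The heart of the argument is a support-preserving correspondence between the bilinear B-splines on $\cM$ and the degree-$\m{p}$ B-spline systems on the enriched mesh. I would show that a rectangle $R$ is the support of a minimal-support bilinear B-spline if and only if it is the common support of a degree-$\m{p}$ B-spline system, and that each such $R$ carries exactly $(s+1)^2$ degree-$\m{p}$ minimal-support B-splines. The mechanism is a one-dimensional sliding-window count on local knot vectors. A bilinear minimal support spans three consecutive distinct meshlines $\xi_0 < \xi_1 < \xi_2$ (one interior meshline $\xi_1$ flanked by the support endpoints), i.e.\ two cells. After raising the interior multiplicities to $s+1$, the length-$(p+2)=(2s+3)$ consecutive sub-knot-vectors whose first knot is $\xi_0$ and last knot is $\xi_2$ are exactly the $s+1$ windows $\xi_0^{(s+2-j)}\xi_1^{(s+1)}\xi_2^{(j)}$, $j=1,\dots,s+1$: since all copies of $\xi_1$ lie strictly between those of $\xi_0$ and $\xi_2$, every such window keeps $\xi_1$ at full multiplicity $s+1$ and hence has support exactly $[\xi_0,\xi_2]$. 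A short multiplicity-budget tally (the $2s+3$ knots forbid spanning more than two cells per direction, while the boundary multiplicity $2s+2$ covers edge and corner supports, where a boundary meshline replaces an endpoint) gives $s+1$ functions per direction in every case. Taking the tensor product over the two parametric directions yields $(s+1)^2$ functions sharing each support, which is precisely a B-spline system.

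With the correspondence in hand the conclusion is immediate. Because a bilinear support and its associated degree-$\m{p}$ system occupy the very same region $R$, a fixed cell $c$ is covered by a bilinear support exactly when it is covered by the corresponding system. Admissibility of $\cM$ for degree $\m{1}$ means, by the Characterization theorem with $(1+1)(1+1)=4$, that every cell lies in exactly four bilinear supports; under the correspondence this is exactly four systems over $c$, i.e.\ $4(s+1)^2 = (p+1)^2$ degree-$\m{p}$ supports. Hence no cell is overloaded, and the Characterization theorem delivers local linear independence of the degree-$\m{p}$ LR B-splines. Together with the multiplicity check of the first paragraph, this shows the enriched mesh is admissible for RM B-splines of degree $\m{p}$.

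I expect the main obstacle to be making the support correspondence fully rigorous on a \emph{general} refined (non-tensor) mesh rather than merely in the uniform interior: one must confirm that the degree-$\m{p}$ LR construction on the enriched mesh produces exactly these systems, with no spurious minimal supports appearing and none missed, across all local configurations left behind by earlier refinements, and that the boundary-saturated cases are tallied correctly. A clean way to secure this is induction on the shared sequence of meshline insertions, verifying that each knot-insertion split of a bilinear B-spline mirrors the simultaneous split of the corresponding degree-$\m{p}$ system while preserving supports and the per-system count; the sliding-window tally then only needs to be checked locally at each insertion.
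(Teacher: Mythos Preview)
Your proposal is correct and follows essentially the same route as the paper: verify the multiplicity condition arithmetically, then use the non-overloading characterization by arguing that the degree-$\m{p}$ B-spline systems occupy exactly the same regions as the bilinear LR B-splines, so that four bilinear supports per cell become $4(s+1)^2=(p+1)^2$ degree-$\m{p}$ supports per cell. The only difference is one of presentation: where the paper simply cites \cite[Section~6.4]{rm1} for the support-preservation of B-spline systems under the multiplicity increase, you spell out the sliding-window knot-vector count and flag the need for an inductive check along the refinement history, which is a reasonable way to make that cited fact self-contained.
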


\begin{proof}
Increasing the multiplicities as stated in the Proposition immediately satisfies the first constraint for an LR mesh to be admissible for RM B-splines of degree $\m{p}$. Since the original LR mesh is admissible for $\m{1}$, each cell of the mesh is covered by exactly $4$ RM B-splines. For degree $\m{1}$, the B-spline systems coincide with the RM B-splines themselves, meaning that each system contains a single function. Upon increasing the multiplicities and the degree from $\m{1}$ to $\m{p}$, the B-spline systems occupy the same regions as before, corresponding to the supports of the original bilinear RM B-splines (see \cite[Section 6.4]{rm1}). Each system now contains $(s + 1)^2$ functions of degree $\m{p}$, yet their support arrangement on the mesh remains unchanged. Therefore, each cell is still contained in $4$ B-spline systems, and the number of RM B-splines nonzero over that cell is $4(s + 1)^2 = (2s + 2)^2 = (p + 1)^2$, which matches the dimension of the bivariate polynomial space of degree $\m{p}$ over the cell. Hence, the RM B-splines are linearly independent on each cell, and thus locally linearly independent in the domain. This ensures that the second admissibility condition is satisfied as well.
\end{proof}

\begin{corollary}\label{cor}
If an LR mesh is admissible for RM B-splines of degree $\m{p} = (p, p)$ with $p = 2s + 1$, then for any other degree $\m{p}' = (p', p')$ with $p' = 2s' + 1$, the same mesh becomes admissible for RM B-splines of degree $\m{p}'$ by adjusting the meshline multiplicities from $s + 1$ to $s' + 1$ internally, and from $2s + 2 = p + 1$ to $2s' + 2 = p' + 1$ on the boundary.
\end{corollary}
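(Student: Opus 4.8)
The plan is to reduce the statement to the already-established Proposition \ref{prop} by isolating the purely geometric content of admissibility, which does not depend on the degree. First I would unpack what admissibility for degree $\m{p}$ means. Condition (1) fixes the multiplicities (internal $s+1$, boundary $2s+2$), while condition (2), combined with the non-overloading Theorem, says that every cell is covered by exactly $(p+1)^2 = 4(s+1)^2$ B-spline supports. Since each B-spline system of degree $\m{p}$ consists of exactly $(s+1)^2$ functions sharing a common support, this is equivalent to the statement that every cell lies in the supports of exactly \emph{four} B-spline systems. This four-systems-per-cell condition is the degree-independent invariant I want to transport.

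Next I would invoke \cite[Section 6.4]{rm1}, where it is shown that the regions occupied by the B-spline systems coincide with the supports of the bilinear RM B-splines determined by the same meshline positions, independently of $s$. Adjusting the multiplicities from the degree-$\m{p}$ values to the degree-$\m{p}'$ values (internal $s+1 \to s'+1$, boundary $2s+2 \to 2s'+2$) leaves the meshlines, the cells, and hence the system support regions unchanged, so condition (1) for $\m{p}'$ holds immediately. The four-systems-per-cell property therefore persists, and recounting exactly as in the proof of Proposition \ref{prop} gives $4(s'+1)^2 = (p'+1)^2$ functions nonzero over each cell, matching the dimension of the bivariate polynomial space of degree $\m{p}'$. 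By the non-overloading Theorem this yields local linear independence, establishing condition (2) for $\m{p}'$.

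Equivalently, and more in the spirit of Proposition \ref{prop}, one may first lower the multiplicities to $1$ internally and $2$ on the boundary, observe that the four-systems-per-cell property makes the mesh admissible for $\m{1}$, and then apply Proposition \ref{prop} with $s$ replaced by $s'$ to climb up to $\m{p}'$. The main obstacle is justifying this downward step, namely that admissibility at degree $\m{p}$ forces admissibility at $\m{1}$: the Proposition as stated only supplies the upward implication, so I must argue separately that the number of systems covering each cell is insensitive to the degree. This is precisely where the degree-independence of the system support arrangement from \cite[Section 6.4]{rm1} is essential, and it is the one point where I would take care to verify that changing multiplicities alters neither the set of meshlines nor the resulting cells.
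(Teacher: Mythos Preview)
Your proposal is correct and coincides with the paper's own proof: the paper argues in two lines that the procedure of Proposition~\ref{prop} is reversible, so one lowers the multiplicities from degree $\m{p}$ to $\m{1}$ and then applies Proposition~\ref{prop} with $s'$ to reach $\m{p}'$. Your second paragraph reproduces exactly this down-then-up argument, and you are in fact more careful than the paper in isolating the one nontrivial point, namely that the downward step requires the degree-independence of the B-spline system supports from \cite[Section~6.4]{rm1}, which the paper invokes only implicitly via the word ``reversible''.
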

\begin{proof}
The procedure outlined in Proposition \ref{prop} is reversible. Reducing the meshline multiplicities and the degree from $\m{p}$ to $\m{1}$ yields an LR mesh that is admissible for $\m{1}$. Applying Proposition \ref{prop} with respect to $\m{p}'$ then confirms that the mesh with adjusted multiplicities is admissible for RM B-splines of degree $\m{p}'$.
\end{proof}

When $s = 0$, i.e., $\m{p} = \pmb{1}$, the RM B-splines are defined on LR meshes with simple internal meshlines and open boundary with respect to $p$. Such meshes are the kind of meshes that are automatically generated by the Hierarchical LR (HLR) \cite{hlr}, Non-Nested Support Structured (N$_2$S$_2$) \cite{n2s2}, and Effective Grading (EG) \cite{eg} refinement strategies. These algorithms were developed to construct locally linearly independent LR B-splines for any degree $\m{p}$ on meshes with internal meshlines of multiplicity $1$. In the bilinear case, the resulting LR meshes are therefore admissible for bilinear RM B-splines. By Proposition \ref{prop}, the same meshes can be reused to construct RM B-splines of any degree $\m{p}$ simply by raising the meshline multiplicities.

Figure \ref{fig:ex} illustrates examples of N$_2$S$_2$, HLR, and EG refinements for bilinear LR B-splines, localized near three points, along the diagonal, and along a circular arc, i.e., some of the configurations already studied in the RM B-spline literature \cite{rm1, rm3, rm2}. RM B-splines of any degree can then be generated on these meshes by setting the meshline multiplicities to $s + 1$ in the interior and $2s + 2$ on the boundary. 
\begin{figure}
\centering
\subfloat[N$_2$S$_2$]{\includegraphics[width = 0.3\textwidth, page = 1]{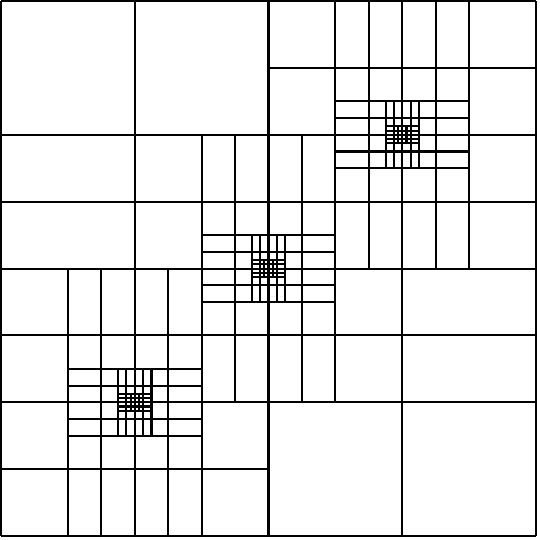}}\quad
\subfloat[HLR]{\includegraphics[width = 0.3\textwidth, page = 2]{ex}}\quad
\subfloat[EG]{\includegraphics[width = 0.3\textwidth, page = 3]{ex}}\\
\subfloat[N$_2$S$_2$]{\includegraphics[width = 0.3\textwidth, page = 4]{ex}}\quad
\subfloat[HLR]{\includegraphics[width = 0.3\textwidth, page = 5]{ex}}\quad
\subfloat[EG]{\includegraphics[width = 0.3\textwidth, page = 6]{ex}}\\
\subfloat[N$_2$S$_2$]{\includegraphics[width = 0.3\textwidth, page = 7]{ex}}\quad
\subfloat[HLR]{\includegraphics[width = 0.3\textwidth, page = 8]{ex}}\quad
\subfloat[EG]{\includegraphics[width = 0.3\textwidth, page = 9]{ex}}
\caption{Examples of LR meshes which are admissible for RM B-splines. The meshes on the left column are built using the N$_2$S$_2$, the central using the HLR and the right using the EG refinements, respectively, for bilinear LR B-splines. All the meshes have been constructed using 7 iterations starting from the open boundary as initial tensor mesh for $\m{p} = (1, 1)$. In the top row (a)--(c) we visually compare the refinements concentrated on three diagonal points. In the central row (d)--(f) we display the refinements along the diagonal. In the bottom row (g)--(i) we display the resulting meshes when refining along a circular arc. All the depicted LR meshes are admissible for RM B-splines of any degree by adjusting the meshline multiplicities as described in Proposition \ref{prop}.}
\label{fig:ex}       
\end{figure}

\section{Including RM B-splines in LR B-spline implementations}\label{sec:imp}
RM B-splines can be conveniently represented within the LR B-spline framework. It is sufficient to store the bilinear LR B-spline set on the mesh along with the desired smoothness parameter $s$. During refinement, there is no need to explicitly construct or store the RM B-splines, as all operations can be performed on the associated bilinear LR B-splines. Only for evaluation we need to construct RM B-splines, and even then, only those $4(s + 1)^2$ functions that are nonzero at the evaluation point are constructed. Specifically, given a point $\pmb{x}$ in the domain, we follow this procedure:
\begin{enumerate}
\item Identify the four bilinear LR B-splines that are nonzero at $\pmb{x}$.
\item Reconstruct the four corresponding B-spline systems of degree $\m{p}$.
\item Evaluate such local configuration using the de Boor--like algorithm described in \cite[Section 5]{rm2}.
\end{enumerate}
Hence, RM B-splines are not globally constructed or stored: only the few functions nonzero at the given point are built on-the-fly from the local knot vectors of the $4$ bilinear LR B-splines and evaluated.
This approach allows RM B-splines to be directly integrated into the existing LR B-spline framework and implementations. They can be generated dynamically when required and efficiently evaluated using the algorithm of \cite[Section 5]{rm2}.

Let us analyze more closely how to generate the RM B-splines in the four B-spline systems of step 2. Given a bilinear LR B-spline whose support contains the evaluation point $\pmb{x}$, we first increase the knot multiplicities in the local knot vectors of this bilinear LR B-spline, by $s$ or $2s$ according to Proposition \ref{prop}. The resulting (extended) knot vectors define a small tensor-product mesh, namely, the mesh underlying the B-spline system associated with that LR function. The $(s + 1)^2$ RM B-splines in the associated B-spline system are then simply the standard tensor-product B-splines built on this local tensor-product mesh. In this process, the extended knot vectors are therefore considered as global knot vectors for the system, and the rest of the LR mesh is completely dismissed. In other words, we isolate the small tensor-product mesh defined in the support of the bilinear LR B-spline, ignore all other local meshlines, and construct the associated B-spline system entirely within this self-contained tensor-product setting.

\section{Numerical considerations}\label{sec:num}
In this section we numerically compare RM B-splines with maximally smooth LR B-splines. 
Our goal is to quantify the effect of the reduced continuity of RM B-splines on both the dimension of the resulting spline spaces and the accuracy, in contrast to the maximal continuity of LR B-splines. 

A key observation is that, despite being defined on meshes with more localized meshlines, the weaker smoothness conditions imposed by RM B-splines leads to significantly larger spline spaces than those generated by maximally smooth LR B-splines. In practice, this implies that RM B-splines require solving much larger linear systems, while, as the numerical tests shall prove, this increase in dimension does not always yield a gain in approximation accuracy. 

As a first experiment, we compare the cardinalities of the RM B-spline set $\mathcal{R}$ with global regularity $C^s$, for $s \in \NN$, of bi-degree $\mathbf{p} = (p,p)$ where $p = 2s+1$, and the LR B-spline set $\mathcal{L}$ of bidegree $\m{p}$ with global regularity 
$C^{p-1}$, i.e., of maximal smoothness. The comparison is carried out on LR meshes constructed using the same refinement strategy, the same number of refinement levels, and identical regions of interest. In particular, we revisit the three refinement scenarios from Figure \ref{fig:ex}: the diagonal refinement, the three-point refinement along the diagonal, and the circular arc refinement. Figure \ref{fig:dim} reports the ratio
$$
\frac{\#\mathcal{R}}{\#\mathcal{L}}
$$
for $s \in \{0, \ldots, 10\}$, illustrating how the RM dimension can grow relative to its LR counterpart.
\begin{figure}
\centering
\subfloat[]{\includegraphics[width = 0.3\textwidth, page = 1]{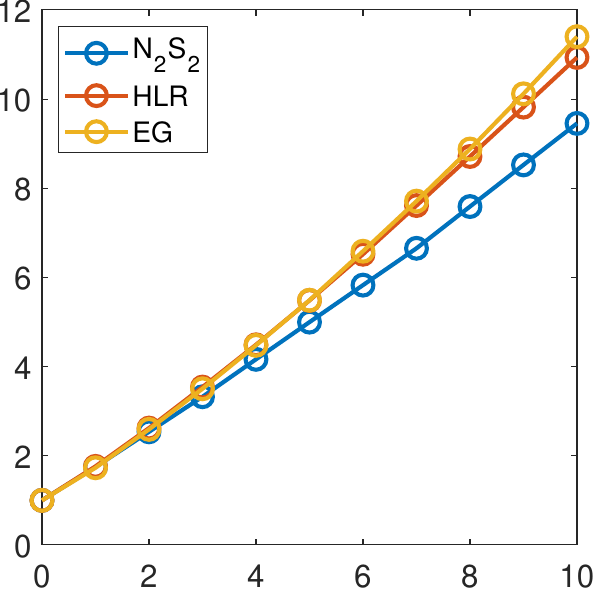}}\quad
\subfloat[]{\includegraphics[width = 0.3\textwidth, page = 2]{dimratio}}\quad
\subfloat[]{\includegraphics[width = 0.3\textwidth, page = 3]{dimratio}}
\caption{Comparisons of the cardinalities of sets of RM B-splines and LR B-splines of maximal smoothness, denoted by $\#\mathcal{R}$ and $\#\mathcal{L}$, of the same degree. The underlying LR meshes have meshlines (a) localized along the diagonal, (b) concentrated at three equidistant points on the diagonal, and (c) aligned along a circular arc, corresponding to the configurations shown in Figure \ref{fig:ex} for the RM B-splines. The meshes are generated using the $\mathrm{N}_2\mathrm{S}_2$, HLR, and EG refinement strategies. For RM B-splines, these strategies are applied to create LR meshes for LR B-splines of bidegree $(1,1)$, independently of the RM smoothness parameter $s$, as explained by Proposition \ref{prop}. For LR B-splines, the same refinement strategies are applied to build LR meshes for LR B-splines of bidegree $\mathbf{p} = (p,p)$ with $p = 2s+1$ and maximal smoothness.
The horizontal axis corresponds to $s \in \{0,\ldots,10\}$, while the vertical axis displays the ratio $\#\mathcal{R}/\#\mathcal{L}$.
}\label{fig:dim}
\end{figure}
Independently of the chosen refinement strategy, the numerical results exhibit two consistent trends:
\begin{itemize}
    \item The more extended the region in which refinement is applied,  the larger the discrepancy between the cardinalities of the RM and LR spline sets, regardless of the value of $s$ (and hence of $p$).
    
    \item As $s$ increases, the ratio $\#\mathcal{R} / \#\mathcal{L}$ can grow very rapidly. In some cases, the difference becomes dramatic, see for example the instance of the circular arc combined with the N$_2$S$_2$ refinement strategy in Figure \ref{fig:dim} (c).
\end{itemize}

The same trends persist, and become even more pronounced, in a second test, where we fix the smoothness $s \in \{0, \ldots, 10\}$ for both spaces. In this case, we compare the RM B-splines of degree $p = 2s+1$ and smoothness $C^{s}$ with LR B-splines of degree $p = s+1$, still of smoothness $C^{s}$. Figure \ref{fig:dims} reports the ratio $\#\mathcal{R} / \#\mathcal{L}$ for the same three refinement 
scenarios depicted in Figure \ref{fig:ex}. 

\begin{figure}
\centering
\subfloat[]{\includegraphics[width = 0.3\textwidth, page = 4]{dimratio}}\quad
\subfloat[]{\includegraphics[width = 0.3\textwidth, page = 5]{dimratio}}\quad
\subfloat[]{\includegraphics[width = 0.3\textwidth, page = 6]{dimratio}}
\caption{Comparisons of the cardinalities of the sets of RM B-splines and LR B-splines of maximal smoothness, denoted by $\#\mathcal{R}$ and $\#\mathcal{L}$, of same regularity. The underlying LR meshes have meshlines (a) localized along the diagonal, (b) concentrated at three equidistant points on the diagonal, and (c) aligned along a circular arc, corresponding to the configurations shown in Figure \ref{fig:ex} for the RM B-splines. The meshes are generated using the $\mathrm{N}_2\mathrm{S}_2$, HLR, and EG refinement strategies. For RM B-splines, these strategies are applied to create LR meshes for LR B-splines of bidegree $(1,1)$, independently of the RM smoothness parameter $s$, as explained in Proposition \ref{prop}. For LR B-splines, the same refinement strategies are applied to build LR meshes for LR B-splines of bidegree $(s + 1, s + 1)$ and maximal smoothness, i.e., $C^s$.
The horizontal axis corresponds to $s \in \{0,\ldots,10\}$, while the vertical axis displays the ratio $\#\mathcal{R}/\#\mathcal{L}$.
}\label{fig:dims}
\end{figure}


We finally compare the performances of $C^s$ RM B-splines of degree $2s + 1$ with respect to LR B-splines of maximal smoothness, still $C^s$ but with degree $s + 1$, within an automatically adaptive isogeometric refinement procedure.
To this end, we consider the Poisson problem with Dirichlet boundary conditions
\begin{equation}\label{eq:poisson}
\left\{
\begin{array}{lll}
-\Delta u(\pmb{x}) = f(\pmb{x}) 
& & \text{for }\pmb{x}\in \Omega \coloneqq [0,1]^2, \\\\
u(\pmb{x}) = u_D(\pmb{x}) 
& & \text{for }\pmb{x}\in \Gamma \coloneqq \partial\Omega ,
\end{array}
\right.
\end{equation}
whose analytical solution is
$$
u(\pmb{x}) = \arctan\left(100\left(\norm{\pmb{x} - \bar{\pmb{x}}}_2 - \frac{\pi}{3}\right)\right), \qquad \bar{\pmb{x}} \defeq (1.25, - 0.25)
$$
shown in Figure \ref{fig:poisson} (a). 
The source term $f$ and boundary values $u_D$ are defined accordingly.
For this experiment we fix $s = 2$ and perform seven iterations of the adaptive loop
\begin{equation}\label{eq:adaptiveIgA}
\scalebox{0.85}{
\begin{tikzpicture}[baseline={(current bounding box.center)}]
\matrix (m)[matrix of math nodes,column sep=5em,row sep=2em]{
 \dblock{SOLVE\,} \pgfmatrixnextcell \dblock{ESTIMATE\,} 
 \pgfmatrixnextcell \dblock{MARK\,}  
 \pgfmatrixnextcell \dblock{REFINE\,} \\
};
\draw[ultra thick, -stealth] (m-1-1) -- (m-1-2);
\draw[ultra thick, -stealth] (m-1-2) -- (m-1-3);
\draw[ultra thick, -stealth] (m-1-3) -- (m-1-4);
\draw[ultra thick, -stealth, rounded corners] 
 (m-1-4) -- (5.375, -1) -- (-5.45, -1) -- (m-1-1);
\end{tikzpicture}}
\end{equation}
starting from a uniform $8\times 8$ tensor mesh. 
After assembling the current approximation $u_h$ in the SOLVE step, we compute the element-wise $L^2$ errors 
$$
\|u_h - u\|_{L^2(\beta)}
$$
for each mesh cell $\beta$ during the ESTIMATE step.  In the MARK step we select all cells $\beta'$ satisfying
$$
\|u_h - u\|_{L^2(\beta')} 
\;\ge\; \theta\cdot 
\max_{\beta} \|u_h - u\|_{L^2(\beta)},
$$
with $\theta$ a percentage, that we have fixed to $0.05$. In the REFINE step the basis is enriched using the 
N$_2$S$_2$ strategy: all RM or LR B-splines whose support intersects at least one marked cell are refined.

The results are shown in Figures \ref{fig:poisson} (b)--(c). Figure (b) displays the global $L^2$ error decay for LR and RM B-splines, and Figure (c) shows the corresponding $L^\infty$ errors. The comparisons indicate that the higher degree of RM B-splines does \emph{not} yield improved accuracy per degree of freedom. 
\begin{figure}
\centering
\subfloat[]{\includegraphics[width = 0.375\textwidth, page = 1]{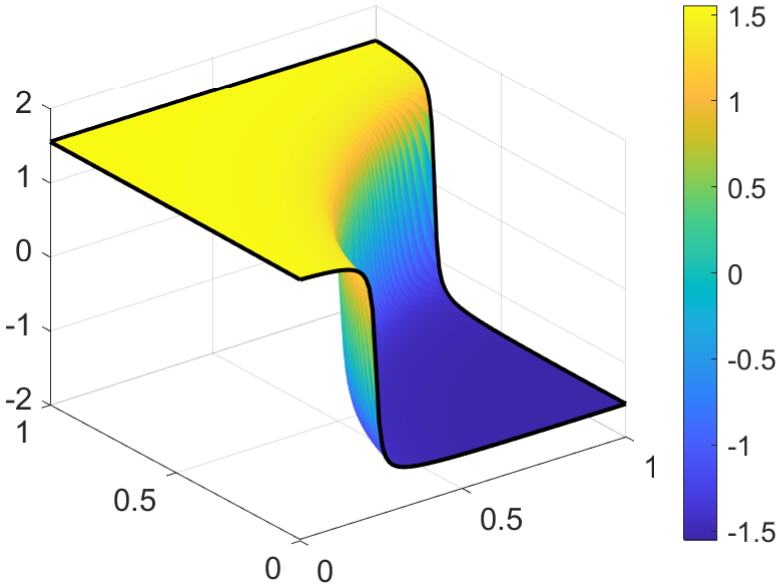}}\quad
\subfloat[]{\includegraphics[width = 0.275\textwidth, page = 2]{poisson}}\quad
\subfloat[]{\includegraphics[width = 0.275\textwidth, page = 3]{poisson}}
\caption{Comparisons of the $L^2$ and $L^\infty$ errors with LR and RM B-splines of the same smoothness ($C^2$) to approximate the solution of the Poisson problem \eqref{eq:poisson} in $[0, 1]^2$. In (a) the representation of the analytical solution in the domain. In (b) the $L^2$ error decays for LR and RM B-splines during 7 iterations of the adaptive IgA cycle \eqref{eq:adaptiveIgA} using the N$_2$S$_2$ refinement strategy, against the number of degrees of freedom in the $x$-axis. In (c) the $L^\infty$ decays.}\label{fig:poisson}
\end{figure}

It is also worth noting that the larger meshline multiplicities, characteristic of RM B-splines, produce visually sparser meshes than in the case of maximally smooth LR B-splines, even though the number of degrees of freedom is actually equal or smaller in the latter. Thus, mesh appearance may be misleading: RM meshes look coarser, yet they define the same or more basis functions than the respective LR meshes. This can be observed, for instance, in Figure \ref{fig:poissonmesh} (a)--(b), which show the final LR and RM meshes for the approximation considered for the Poisson problem \eqref{eq:poisson}.
\begin{figure}
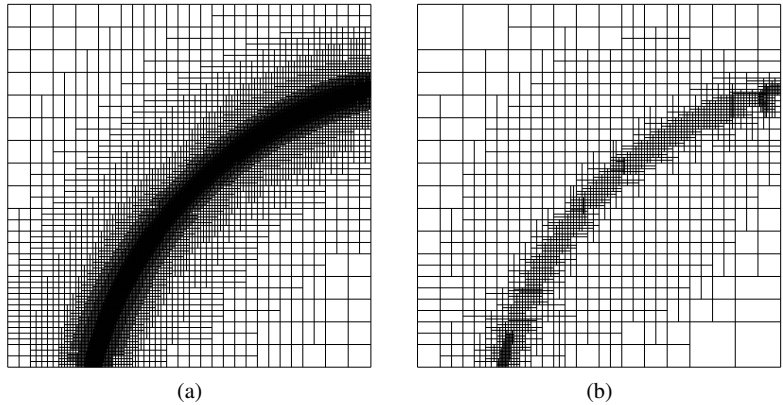

\centering
\subfloat[]{\includegraphics[width = 0.4\textwidth, page = 4]{poisson}}\qquad
\subfloat[]{\includegraphics[width = 0.4\textwidth, page = 5]{poisson}}
\caption{Comparisons of the LR meshes in the latest approximations of the solution of the Poisson problem \eqref{eq:poisson} using LR and RM B-splines of the same smoothness ($C^2$). In (a) the LR mesh for the LR B-splines of degree $s+1$, in (b) the LR mesh for the RM B-splines of degree $2s + 1$, with $s = 2$. Both meshes are obtained from an initial tensor mesh of $8\times 8$ boxes after 7 iterations of the adaptive IgA cycle \eqref{eq:adaptiveIgA} using the N$_2$S$_2$ refinement strategy. The mesh for the LR B-splines of maximal smoothness looks denser than that for the RM B-splines, despite the number of functions, of the respective type, is actually essentially the same.}\label{fig:poissonmesh}
\end{figure}

\section{Conclusion}\label{sec:con}
In this work we have reformulated RM B-splines within the LR B-spline framework. From this viewpoint, RM B-splines can be interpreted not as a distinct B-spline--like basis, but rather as a particular LR refinement mechanism that enforces local linear independence. The construction procedure for RM B-splines is, in fact, closely related to the N$_2$S property reinforcement step of the N$_2$S$_2$ strategy introduced in \cite{n2s2}, when applied to meshes whose meshlines have higher multiplicities.

This LR reinterpretation also makes transparent how admissible refinements for RM B-splines can be defined almost effortlessly by adapting established LR refinement strategies, namely N$_2$S$_2$, HLR 
and EG, originally designed for locally linearly independent LR B-splines on meshes with internal meshlines of multiplicity $1$. As shown in Proposition \ref{prop}, these strategies can be reused essentially unchanged, except for an increase in meshline multiplicities. Moreover, this perspective reveals that RM B-splines can be incorporated directly and with minimal effort into existing LR B-spline implementations.

Finally, our numerical investigations further show that the gap between degree and smoothness, intrinsic to RM B-splines, can lead to larger number of degrees of freedom to achieve similar accuracy. 
On the other hand, their simpler topological structure may still be attractive in contexts where efficient, specialized algorithms, such as fast evaluation procedures \cite{rm2}, can be exploited once the underlying approximation has been constructed.

\begin{acknowledgement}
This work has been supported by the MUR Excellence Department Project MatMod@TOV (CUP E83C23000330006) awarded to the Department of Mathematics of the University of Rome Tor Vergata. The author is member of the \textit{Gruppo Nazionale per il Calcolo Scientifico} of the Italian \textit{Istituto Nazionale di Alta Matematica “Francesco Severi”} (GNCS-INdAM). The INdAM support through GNCS 2025 project ``PASTRAMI - sPline And Solver innovaTions foR Adaptive isogeoMetric analysIs'' (CUP E53C24001950001) is gratefully acknowledged.
\end{acknowledgement}

\bibliographystyle{plain}
\bibliography{biblio}
\end{document}